\documentclass[12pt]{article}
\usepackage{graphicx} 
\usepackage{graphicx}%
\usepackage{multirow}%
\usepackage{amsmath,amssymb,amsfonts}%
\usepackage{amsthm}%
\usepackage{mathrsfs}%
\usepackage[title]{appendix}%
\usepackage{xcolor}%
\usepackage{textcomp}%
\usepackage{manyfoot}%
\usepackage{booktabs}%
\usepackage{algorithm}%
\usepackage{algorithmicx}%
\usepackage{algpseudocode}%
\usepackage{listings}%
\usepackage{amssymb,latexsym,amsmath}     
\usepackage{epsf}
\usepackage{array}
\usepackage{amssymb,amscd,comment}
\usepackage{float}
\usepackage{comment, verbatim}
\usepackage[left=2.55cm, right=2.55cm, top=2.55cm, bottom=2.55cm]{geometry}
\usepackage{relsize}
\usepackage{calc}
\usepackage[mathscr]{eucal}
\usepackage{hhline}
\usepackage{tikz}
\usetikzlibrary{positioning}
\usepackage{enumitem}
\setlist[enumerate]{leftmargin=.5in}
\setlist[itemize]{leftmargin=.5in}
\usepackage{multirow}
\usepackage{tabularx}
\usepackage{soul, color}
\usepackage{amssymb,amstext,amsgen,amsfonts,mathrsfs,verbatim,url,hyperref,mathdots, mathtools}
\usepackage{physics}
\usepackage{pgfplots}
\usepackage{orcidlink}
\pgfplotsset{compat=1.18}
\usepackage{mathrsfs}
\usepackage{arydshln}
\ifpdf
  \DeclareGraphicsExtensions{.eps,.pdf,.png,.jpg}
\else
  \DeclareGraphicsExtensions{.eps}
\fi

\newlist{inparaenum}{enumerate}{2}
\setlist[inparaenum]{nosep}
\setlist[inparaenum,1]{label=\bfseries\alph*.}

\def\comment#1{}

\usetikzlibrary{arrows}

\def\invddots{\mathinner{\mskip1mu\raise1pt\vbox{\kern7pt\hbox{.}}\mskip2mu
		\raise4pt\hbox{.}\mskip2mu\raise7pt\hbox{.}\mskip1mu}}

\theoremstyle{plain}%
\newtheorem{theorem}{Theorem}

\theoremstyle{definition}%

\begin{document}

\author{Amin Aboubrahim \orcidlink{0000-0002-1110-4265}\,$^{a}$\footnote{\href{mailto:abouibrah@hartford.edu}{abouibrah@hartford.edu}}~,~Anastasiia Minenkova \orcidlink{0000-0002-8233-7971}\,$^b$\footnote{\href{mailto:minenkova@hartford.edu}{minenkova@hartford.edu}}~~and Tomas Yonkers \orcidlink{0009-0004-4190-6340}\,$^{a,b}$\footnote{\href{mailto:yonkers@hartford.edu}{yonkers@hartford.edu}} \\~\\
$^{a}$\textit{\normalsize Department of Physics, University of Hartford,} \\
\textit{\normalsize 200 Bloomfield Ave., West Hartford, CT 06117, U.S.A.} \\
$^{b}$\textit{\normalsize Department of Mathematics, University of Hartford,} \\
\textit{\normalsize 200 Bloomfield Ave., West Hartford, CT 06117, U.S.A.}
}

\title{
\Large \bf
Hypergeometric solutions to coupled Klein-Gordon equations in cosmology with small-field excursions
}

\date{}

\maketitle

\begin{abstract}

The Klein-Gordon equation is a second order differential equation that appears in a variety of applications in mathematics, physics and engineering. This equation is often solved using numerical methods, since a general analytical solution is unattainable. However, one can find very useful analytical solutions for some special cases. In cosmology, the Klein-Gordon equation of a field can explain the physics of inflation and has important consequences on understanding the origin and evolution of the universe. An analytical solution to the Klein-Gordon equation in cosmology has been obtained in the small-field excursion approximation, and it turns out that the solution can be written in terms of the confluent hypergeometric limit function. In this work, we derive this result again while showing the details of the calculations and then generalize to the case of two coupled Klein-Gordon equations. We show that the solution of this coupled system can be written in terms of Kummer's hypergeometric limit function while considering minimal assumptions. We carry out a full numerical calculation of the coupled system and compare it to the results from the analytical expressions.

\end{abstract}

\numberwithin{equation}{section}
\newpage

{  \hrule height 0.4mm \hypersetup{linkcolor=black,linktocpage=true} \tableofcontents
\vspace{0.5cm}
 \hrule height 0.4mm}

\section{Introduction}  

Orthogonal polynomials and special functions play a fundamental role in applied mathematics because they provide exact solutions to many differential equations arising in physics and engineering. Classical families such as Legendre, Hermite, and Laguerre polynomials, and Bessel functions appear in problems involving wave propagation, quantum mechanics, electromagnetism, and fluid dynamics. 

This study focuses on the Klein-Gordon (KG) equation, which is a second order ordinary differential equation that appears in many applications of mathematics and physics. The KG equation is solved numerically since exact solutions are rare because of the complexity of the underlying equations.
In cosmology, the KG equation describes the evolution of a spin zero field (a scalar or pseudoscalar field) which can act as dark matter or dark energy~\cite{Aboubrahim:2024spa,Aboubrahim:2024cyk,Aboubrahim:2025usl}. A full and general analytical solution is not possible because the expanding universe introduces a time-dependent ``friction'' term and the choice of the potential term can impose different time scales in the equation. However, a full or partial analytical solution is possible for particular potential terms and for small field excursions~\cite{Poulin:2018dzj,Tsujikawa:2013fta}. In~\cite{Poulin:2018dzj}, the authors analytically solved the KG equation for early time evolution before the field enters into an oscillatory behavior. The solution turns out to be the confluent hypergeometric limit function ${}_0F_1(;b;z)$.

The importance of these mathematical structures extends beyond the aforementioned KG equations. Orthogonal polynomials and special functions frequently arise in the construction of spectral methods, approximation schemes, and eigenfunction expansions for differential equations. Namely, hypergeometric functions provide a unified framework for studying the behavior of solutions, deriving asymptotic approximations, and identifying relationships among seemingly distinct physical systems.

We begin with a review of confluent hypergeometric functions and their connections to generalized Laguerre polynomials. We then consider the KG equation describing a dark matter scalar field and show that its linearized early-time evolution can be expressed in terms of the confluent hypergeometric limit function ${}_0F_1$. Next, we extend the analysis to a coupled system of dark matter and dark energy fields and demonstrate that the resulting equations can be reduced to Kummer's differential equation, yielding solutions involving ${}_1F_1$. Finally, we compare the resulting analytical expressions with numerical computations and investigate their behavior across a range of coupling strengths.

\section{The confluent hypergeometric functions}

The confluent hypergeometric functions form an important class of special functions and emerge naturally as confluent limits of the generalized hypergeometric functions.  Particularly important are the confluent hypergeometric limit function ${}_0F_1(;b;z)$ and Kummer's confluent hypergeometric function of the first kind ${}_1F_1(a;b;z)$ (see~\cite{Jr:2021rnt} for a review).

\subsection{Definition and the differential equation}

Kummer's differential equation is
\begin{equation}
z\frac{\dd^2f}{\dd z^2}+(b-z)\frac{\dd f}{\dd z}-af=0\,,
\label{eq:kummer}
\end{equation}
where $a,b\in\mathbb C$ are fixed parameters. The point $z=0$ is a regular singular point, whereas $z=\infty$ is an irregular singular point. Equation \eqref{eq:kummer} has two linearly independent
solutions, one of which is Kummer's confluent hypergeometric function of the first kind, introduced by Kummer in 1837 \cite{Kummer1837}.

The function ${}_1F_1(a;b;z)$ is defined by the absolutely convergent power series
\begin{equation}
{}_1F_1(a;b;z)=\sum_{n=0}^{\infty}\frac{(a)_n}{(b)_n}\frac{z^n}{n!}\,,
\label{eq:1f1series}
\end{equation}
where
\[
(a)_n=a(a+1)\cdots(a+n-1)=\frac{\Gamma(a+n)}{\Gamma(a)}
\]
is the Pochhammer symbol or rising factorial, with $(a)_0=1$. Provided that $b\notin\mathbb{Z}_{\le0}$, the series converges for all finite values of $z$, and hence ${}_1F_1(a;b;z)$ is an entire function of $z$.

A second  confluent hypergeometric function we consder is the confluent hypergeometric limit function
\begin{equation}
{}_0F_1(;b;z)=\sum_{n=0}^{\infty}\frac{z^n}{(b)_n n!}\,.
\label{eq:0f1series}
\end{equation}
This function may be obtained as a confluent limit of ${}_1F_1(a;b;z)$:
\begin{equation*}
{}_0F_1(;b;z)=\lim_{a\rightarrow\infty}{}_1F_1\left(a;b;\frac{z}{a}\right)\,.
\end{equation*}
The function ${}_0F_1$ satisfies the differential equation
\begin{equation}
z\frac{\dd^2 f}{\dd z^2}+b\frac{\dd f}{\dd z}-f=0\,.
\label{eq:0f1ode}
\end{equation}
Because of its close relationship to Bessel functions, ${}_0F_1$ serves as an important link between hypergeometric and Bessel function theory.

\subsection{Basic properties}

The confluent hypergeometric function satisfies
\[
{}_1F_1(a;b;0)=1.
\]
Differentiating term-by-term in the defining series yields the fundamental identity
\begin{equation*}
\frac{\dd}{\dd z}\,{}_1F_1(a;b;z)=\frac{a}{b}\,{}_1F_1(a+1;b+1;z).
\end{equation*}
One of the key identities satisfied by ${}_1F_1(a;b;z)$ is Kummer's transformation:
\begin{equation}
{}_1F_1(a;b;z)=e^{z}\,{}_1F_1(b-a;b;-z).
\label{eq:kummertransform}
\end{equation}
This transformation is frequently used in asymptotic analysis and in the derivation of numerous identities involving confluent hypergeometric functions.

The confluent hypergeometric limit function ${}_0F_1$ is closely related to the Bessel function of the first kind. Specifically,
\begin{equation}
J_\nu(x)=\frac{\left(\frac{x}{2}\right)^\nu}{\Gamma(\nu+1)}{}_0F_1\left(;\nu+1;-\frac{x^2}{4}\right).
\label{eq:besselrep}
\end{equation}
Kummer’s confluent hypergeometric function of the first kind ${}_1F_1(a;b;z)$ can also be related to the Bessel function of the first kind through
\begin{equation}
    {}_1F_1\left(\frac{a}{2};a;ibx\right)=e^{ibx/2}\,\Gamma\left(\frac{a+1}{2}\right)\left(\frac{bx}{4}\right)^{\frac{1-a}{2}}J_{\frac{a-1}{2}}\left(\frac{bx}{2}\right)\,,
    \label{eq:f11bessel}
\end{equation}
for $a,b,x>0$~\cite{parida:hal-05236085}. Consequently, many properties of Bessel functions can be interpreted as properties of the hypergeometric functions ${}_0F_1$ and ${}_1F_1$.

\subsection{Asymptotics}

The asymptotic behavior of confluent hypergeometric functions is considerably richer than that of most elementary functions.

For $|\arg z|<3\pi/2$, Kummer's function admits the asymptotic expansion
\begin{equation*}
{}_1F_1(a;b;z)\sim\frac{\Gamma(b)}{\Gamma(a)}e^{z}z^{a-b}\left[1+\frac{(b-a)(1-a)}{z}+\mathcal{O}(z^{-2})\right], \qquad z\rightarrow\infty\,.
\end{equation*}
This expression shows that ${}_1F_1(a;b;z)$ generally grows exponentially as $z$ becomes large.

The second independent solution of Kummer's equation~\eqref{eq:kummer}, usually denoted by $U(a,b,z)$ and referred to as Tricomi's function, has the asymptotic behavior
\begin{equation}
U(a,b,z)\sim z^{-a}, \qquad |z|\rightarrow\infty\,,    
\label{eq:tricomi}
\end{equation}
and is therefore of particular interest in boundary-value problems requiring solutions that decay at infinity.

For the limit function ${}_0F_1$, asymptotic formulas may be derived from its connection with Bessel functions. In particular,
\[
{}_0F_1(;b;z)\sim\frac{\Gamma(b)}{2\sqrt{\pi}}z^{\frac14-\frac b2}e^{2\sqrt z},\qquad
z\rightarrow+\infty\,.
\]
This behavior resembles the asymptotic growth of modified Bessel functions.

\subsection{Particular examples}

Many elementary and special functions arise as special cases of confluent hypergeometric functions.

\paragraph{The exponential function.}

When $a=b$, \eqref{eq:1f1series} reduces to
\[
{}_1F_1(a;a;z)=e^z\,.
\]
Thus the exponential function itself is a special case of a confluent hypergeometric function.

\paragraph{Bessel functions.}

Using \eqref{eq:besselrep}, we write
\[
{}_0F_1\left(;\nu+1;-\frac{x^2}{4}\right)=\Gamma(\nu+1)\left(\frac{x}{2}\right)^{-\nu} J_\nu(x)\,.
\]
Hence Bessel functions can also be viewed as special cases of generalized hypergeometric functions.

\paragraph{The error function.}

The error function admits the representation
\begin{equation*}
\operatorname{erf}(x)=\frac{2x}{\sqrt{\pi}}{}_1F_1\left(\frac12;\frac32;-x^2\right)\,,
\end{equation*}
which explains the frequent appearance of confluent hypergeometric functions in diffusion theory and probability.

\paragraph{The gamma function.}

The gamma function $\Gamma(a)$ is defined as
\begin{equation}
    \Gamma(a)=\int_0^\infty t^{a-1}e^{-t}\,\dd t\,.
    \label{eq:gamma}
\end{equation}
It can also be written in terms of the confluent hypergeometric function~\cite{Sneddon_1961}
\begin{equation*}
    \Gamma(a)=\lim_{z\to\infty}\frac{z^a}{a}\,{}_1F_1(a;a+1;-z)\,.
\end{equation*}

\paragraph{Laguerre polynomials.}

One of the most significant polynomial connections to Kummer's
function occurs when
\[
a=-n, \qquad n\in\mathbb N_0\,.
\]
Since
\[(-n)_k=0, \qquad k>n\,,
\]
the infinite series terminates and becomes a polynomial of degree $n$. The generalized Laguerre polynomials are defined by
\begin{equation}
L_n^{(\alpha)}(z)=\frac{(\alpha+1)_n}{n!}{}_1F_1(-n;\alpha+1;z)\,.
\label{eq:laguerrehyper}
\end{equation}
The first few generalized Laguerre polynomials are
\begin{align*}
L_0^{(\alpha)}(z)&=1\,, \\
L_1^{(\alpha)}(z)&=\alpha+1-z\,,
\end{align*}
and
\[
L_2^{(\alpha)}(z)=\frac12\left[z^2-2(\alpha+2)z+(\alpha+1)(\alpha+2)\right]\,.
\]
Substituting $a=-n$ and $b=\alpha+1$ into Kummer's equation~\eqref{eq:kummer} produces the Laguerre differential equation
\begin{equation}
z\frac{\dd^2y}{\dd z^2}+(\alpha+1-z)\frac{\dd y}{\dd z}+ny=0\,.
\label{eq:laguerreode}
\end{equation}
The generalized Laguerre polynomials form an orthogonal family on $[0,\infty)$ with respect to the weight function
\[
w(z)=z^\alpha e^{-z}\,,
\]
satisfying
\begin{equation*}
\int_0^\infty e^{-z}z^\alpha L_m^{(\alpha)}(z) L_n^{(\alpha)}(z)\,\dd z=
\frac{\Gamma(n+\alpha+1)}{n!}\delta_{mn}\,,
\end{equation*}
where $\delta_{mn}$ is the Kronecker delta.

\paragraph{Applications of Laguerre polynomials.}

Laguerre polynomials play a central role in mathematical physics. Their most famous application occurs in quantum mechanics, where they appear in the exact solution of the hydrogen atom (e.g. see~\cite{zettili2009quantum}). After separation of variables in spherical coordinates, the radial wavefunctions are given by
\[
R_{nl}(r)=C_{nl}e^{-r/(na_0)}\left(\frac{2r}{na_0}\right)^l L_{n-l-1}^{(2l+1)} \left(\frac{2r}{na_0}\right),
\]
where $a_0$ denotes the Bohr radius and $C_{nl}$ is a normalization constant. Consequently, the discrete energy levels and orbital structures of hydrogen-like atoms are intimately connected with generalized Laguerre polynomials.

In numerical analysis, Laguerre polynomials form the basis of Gauss--Laguerre quadrature, a highly efficient technique for evaluating integrals of the form
\[
\int_0^\infty e^{-x}f(x)\,\dd x.
\]
Finally, Laguerre polynomials appear in probability theory and statistics (e.g. see~\cite{Lawi2008} and \cite{MUSTAPHA20102178}).

Therefore, Laguerre polynomials provide a remarkable connection between confluent hypergeometric functions, orthogonal polynomial theory, quantum mechanics, optics, probability theory, and numerical analysis.

\section{The Klein-Gordon equation in cosmology}

We give an application of the confluent hypergeometric function obtained as a solution to the KG equation in cosmology. We begin with a single KG equation and then consider the case of two coupled KG equations representing dark matter and dark energy fields. 

\subsection{The single Klein-Gordon equation for a dark matter field}

In this section, we follow closely the implementation of~\cite{Poulin:2018dzj}. In their work, the authors rewrite the KG equation in terms of dimensionless quantities and directly show their final result. This section expands on the omitted steps and presents the progression leading to the final result.

The spacetime interval for an isotropic, homogeneous and spatially-flat universe is given by the Friedmann–Lema\^{i}tre–Robertson–Walker (FLRW) metric
\begin{equation*}
    \dd s^2=-\dd t^2+a^2(t)\sum_{1\le j,k \le3}\delta_{jk}\dd x_j \dd x_k\,,
\end{equation*}
where $a(t)$ is the scale factor, $t$ denotes the cosmic time and the symbol $\delta_{jk}$ is the Kronecker delta.  

The evolution of the scalar field $\phi(t)$ (here represents dark matter) is described by the Klein-Gordon equation
\begin{equation*}
    \ddot{\phi}+3H\dot{\phi}+\frac{\dd V_n(\phi)}{\dd\phi}=0\,,
\end{equation*}
where a dot denotes a derivative with respect to cosmic time, $H=\dot{a}/a$ is the Hubble parameter and the potential is given by
\begin{equation*}
V_n(\phi)=\Lambda^4\left(1-\cos\frac{\phi}{f}\right)^n\,.    
\end{equation*}
The quantities $\Lambda$ and $f$ are real and positive while $n$ is a positive integer. The Hubble parameter can also be expressed as $H=H_0 E(a)$, where $H_0$ is today's Hubble parameter (a constant) and $E(a)$ is a function of the scale factor. In order to solve the KG equation, it is convenient to redefine the variables so that they are dimensionless. If we define $\Theta\equiv\phi/f$, $m\equiv \Lambda^2/f$, $\alpha\equiv f/M_p$~(here $M_p$ is the Planck mass), $x\equiv H_0 t$ and $\mu\equiv m/H_0$, then the ordinary second order differential equation we have to solve takes the form
\begin{equation}\label{eq:diffeqTheta}
\Theta'' + \frac{3p}{x}\,\Theta' + \alpha^{-2} \frac{\dd V_n}{\dd\Theta}(\Theta) = 0,   
\end{equation}
where the prime is a derivative with respect to $x$. Based on the new variables, the potential now takes the form
\begin{align}\label{eq:vntheta}
V_n(\Theta) &= \mu^2 \alpha^2 \bigl(1 - \cos\Theta\bigr)^n\,,
\end{align}
and
\begin{align}
    &H=\frac{\dot{a}}{a}=\frac{1}{a}\frac{\dd a}{\dd t}=\frac{1}{a}\frac{\dd a}{\dd x}\frac{\dd x}{\dd t}=  \frac{a^\prime}{a}H_0\;\text{ or }\; \frac{H}{H_0}=\frac{a^\prime}{a}=E(a)\,.
\end{align}
So, we have
\begin{equation*}
    E(a)=\frac{a^\prime}{a}=\frac{p}{x}\,,
\end{equation*}
with $p=1/2$ during radiation domination and $p=2/3$ during matter domination.

We expand the potential $V_n(\Theta)$ about its initial value $\Theta_i$:
\begin{align}
V_n(\Theta)= V_n(\Theta_i) + \frac{\dd V_n}{\dd\Theta} (\Theta_i)\,\Big(\Theta-\Theta_i\Big)+ \frac{1}{2} \frac{\dd^2V_n}{\dd\Theta^2}(\Theta_i)\,\Big(\Theta-\Theta_i\Big)^2 + \cdots  
\label{Vn}
\end{align}
Taking the derivative of \eqref{Vn}, we get
\begin{align}
\frac{\dd V_n}{\dd\Theta}(\Theta)&=\frac{\dd V_n}{\dd\Theta} (\Theta_i)+ \frac{\dd^2V_n}{\dd\Theta^2}(\Theta_i)\,\Big(\Theta-\Theta_i\Big) + \cdots \nonumber \\
&=\frac{\dd V_n}{\dd\Theta} (\Theta_i)+ \frac{\dd^2V_n}{\dd\Theta^2}(\Theta_i)\,\delta\Theta+ \mathcal{O}(\delta\Theta^2),
\end{align}
where $\delta\Theta=\Theta-\Theta_i$ such that $\delta\Theta\ll \Theta_i$ so that $\mathcal{O}(\delta\Theta^2)$ and higher order terms can be dropped.

Now, let us compute the derivatives using \eqref{eq:vntheta}:
\begin{align}
\frac{\dd V_n}{\dd\Theta}(\Theta) &= \mu^2\alpha^2\,n\,(1-\cos\Theta)^{n-1}\sin\Theta, \\
\frac{\dd^2V_n}{\dd\Theta^2}(\Theta)&= \mu^2\alpha^2\,n\,(n-1)\,(1-\cos\Theta)^{n-2}\sin^2\Theta+\mu^2\alpha^2\,n\,(1-\cos\Theta)^{n-1}\cos\Theta \nonumber \\
&=\mu^2\alpha^2\,n\,(1-\cos\Theta)^{n-1}(n+n\cos\Theta-1)\,.
\end{align}
From this we can see that
\begin{equation*}
\frac{\dd V_n}{\dd\Theta}(\Theta_i)=\left(\frac{\sin\Theta_i}{n+n\cos\Theta_i-1}\right)\frac{\dd^2V_n}{\dd\Theta^2}(\Theta_i) \,.   
\end{equation*}
So, we then write
\begin{equation*}
\frac{\dd V_n}{\dd\Theta}(\Theta)=\left(\frac{\sin\Theta_i}{n+n\cos\Theta_i-1}+\delta\Theta\right)\frac{\dd^2V_n}{\dd\Theta^2}(\Theta_i)\,.
\end{equation*}
Therefore, the differential equation \eqref{eq:diffeqTheta} becomes
\begin{equation*}
\delta\Theta'' + \frac{3p}{x}\,\delta\Theta' + \left(\frac{\sin\Theta_i}{n+n\cos\Theta_i-1}+\delta\Theta\right)\alpha^{-2}\frac{\dd^2V_n}{\dd\Theta^2}(\Theta_i)=0\,.    
\end{equation*}
Setting $\beta(\Theta_i)\equiv \sin\Theta_i/(n+n\cos\Theta_i-1)$, $y\equiv \beta(\Theta_i)+\delta\Theta$ and
\begin{equation*}
    M^2\equiv \alpha^{-2}\frac{\dd^2V_n}{\dd\Theta^2}(\Theta_i)=\mu^2\,n\,(1-\cos\Theta_i)^{n-1}(n+n\cos\Theta_i-1)\,,
\end{equation*}
we finally have
\begin{equation}
y'' + \frac{3p}{x}y' + M^2  y= 0.
\label{dfq1}
\end{equation}
The confluent hypergeometric limit function ${}_0F_1(;b;z)$ satisfies the differential equation~(\ref{eq:0f1ode}).
To bring \eqref{dfq1} to the form of \eqref{eq:0f1ode}, define
\begin{equation*}
z = \xi x^n, \qquad y(x) = f(z(x))\,,    
\end{equation*}
where $\xi$ and $n$ are constants to be determined. Then
\begin{equation*}
\frac{\dd y}{\dd x} = \frac{\dd f}{\dd z}\frac{\dd z}{\dd x} = \xi n x^{n-1} \frac{\dd f}{\dd z}\,,
\end{equation*}
and
\begin{align}
\frac{\dd^2y}{\dd x^2} &=\frac{\dd}{\dd x}\left(\frac{\dd f}{\dd z}\frac{\dd z}{\dd x}\right)=\frac{\dd}{\dd z}\left(\frac{\dd f}{\dd z}\frac{\dd z}{\dd x}\right)\frac{\dd z}{\dd x}  \nonumber \\
&=\left(\frac{\dd^2 f}{\dd z^2}\frac{\dd z}{\dd x}\right)\frac{\dd z}{\dd x}+\left(\frac{\dd f}{\dd z}\frac{\dd}{\dd z}\frac{\dd z}{\dd x}\right)\frac{\dd z}{\dd x} \nonumber \\
&=\frac{\dd^2 f}{\dd z^2}\left(\frac{\dd z}{\dd x}\right)^2 + \frac{\dd f}{\dd z}\frac{\dd^2 z}{\dd x^2} \nonumber \\
&= n^2\xi^2 x^{2n-2} \frac{\dd^2 f}{\dd z^2} + \xi n(n-1)x^{n-2} \frac{\dd f}{\dd z}\,.    
\end{align}
Substitute these into \eqref{dfq1}:
\begin{align}
0
&= y'' + \frac{3p}{x}y' + M^2 y \nonumber \\
&= \left[n^2\xi^2 x^{2n-2} \frac{\dd^2 f}{\dd z^2} + \xi n(n-1)x^{n-2} \frac{\dd f}{\dd z}\right]
 + \frac{3p}{x}\left(\xi n x^{n-1} \frac{\dd f}{\dd z}\right)
 + M^2 f(z) \nonumber \\
&=n^2\xi^2 x^{2n-2} \frac{\dd^2 f}{\dd z^2}+\left[\xi n(n-1)x^{n-2}+\frac{3p}{x}\xi n x^{n-1}\right]\frac{\dd f}{\dd z}+M^2 f(z)\,.   
\end{align}
Multiply both sides by $\dfrac{x^{2-n}}{n^2 \xi}$ to get:
\begin{equation}
\xi x^n \frac{\dd^2 f}{\dd z^2}+\left(\frac{n-1}{n}+\frac{3p}{n}\right)\frac{\dd f}{\dd z}+\frac{M^2}{n^2 \xi}x^{2-n}f(z)=0\,.
\label{eqxy}
\end{equation}
For the last term in \eqref{eqxy} to have a  constant coefficient one must have $n=2$. We then get
\begin{equation*}
    \xi x^2 \frac{\dd^2 f}{\dd z^2}+\left(\frac{1+3p}{2}\right)\frac{\dd f}{\dd z}+\frac{M^2}{4\xi}f(z)=0\,.
\end{equation*}
Finally, setting $\xi\equiv -M^2/4$ will give
\begin{equation}
z \frac{\dd^2 f}{\dd z^2} + \left(\frac{1+3p}{2}\right) \frac{\dd f}{\dd z} - f(z) = 0.
\label{dfq2}
\end{equation}
Comparing \eqref{dfq2} with \eqref{eq:0f1ode}, we identify
\begin{equation*}
b \equiv \frac{1+3p}{2}\,.    
\end{equation*}
The regular solution of \eqref{dfq2} is the confluent hypergeometric limit function~(\ref{eq:0f1series}):
\begin{equation*}
f(z) = {}_0F_1\!\left(; \frac{1+3p}{2}; z\right).    
\end{equation*}
Knowing that $z = -M^2 x^2 / 4$, we find
\begin{equation*}
y(x) = C\,\,{}_0F_1\!\left(; \frac{1+3p}{2}; -\frac{M^2 x^2}{4}\right)\,,    
\end{equation*}
where $C$ is a constant of integration to be determined. Restoring $\Theta = \Theta_i + y-\beta$, we have
\begin{equation*}
\Theta(x) = \Theta_i + C\,\,{}_0F_1\!\left(; \frac{1+3p}{2}; -\frac{M^2\,x^2}{4}\right)-\beta(\Theta_i)\,.    
\end{equation*}
As $x\to 0$, $\Theta$ approaches $\Theta_i$, and since ${}_0F_1(;a;0)=1$, we find that $C=\beta(\Theta_i)$. So, we finally write
\begin{equation}
\Theta(x) = \Theta_i + \frac{\sin\Theta_i}{n+n\cos\Theta_i-1}\Bigg[{}_0F_1\!\left(; \frac{1+3p}{2}; -\frac{M^2\,x^2}{4}\right)-1\Bigg]\,.
\label{Tsol}
\end{equation}
Set $\mathcal{A}\equiv -\frac{1}{4}M^2$. Let us expand the confluent hypergeometric limiting function:
\begin{align}
{}_0F_1\!\left(; \frac{1+3p}{2}; \mathcal{A}\,x^2\right)&=\sum_{k=0}^{\infty}\frac{(\mathcal{A}x^2)^k}{\left(\frac{3p+1}{2}\right)_k\,k!}\simeq 1+\frac{\mathcal{A}x^2}{\frac{3p+1}{2}1!}+\mathcal{O}(\mathcal{A}^2x^4) \nonumber \\
&=1+\frac{2\mathcal{A}x^2}{3p+1}+\mathcal{O}(\mathcal{A}^2x^4)=1-\frac{M^2x^2}{2(3p+1)}+\mathcal{O}(\mathcal{A}^2x^4)\,.
\end{align}
Replacing this expression back into \eqref{Tsol}, we get
\begin{align}
\Theta(x) &\simeq \Theta_i + \frac{\sin\Theta_i}{n+n\cos\Theta_i-1}\Bigg[1-\frac{M^2x^2}{2(3p+1)}+\mathcal{O}(\mathcal{A}^2x^4)-1\Bigg] \nonumber \\
&\simeq \Theta_i - \frac{\sin\Theta_i}{n+n\cos\Theta_i-1}\mu^2\,n\,(1-\cos\Theta_i)^{n-1}(n+n\cos\Theta_i-1)\frac{x^2}{2(3p+1)}+\mathcal{O}(\mathcal{A}^2x^4) \nonumber \\
&\simeq \Theta_i-\frac{\mu^2 n x^2\sin\Theta_i(1-\cos\Theta_i)^{n-1}}{2(3p+1)}+\mathcal{O}(\mathcal{A}^2x^4)\,,
\end{align}
which is precisely the result shown in~\cite{Poulin:2018dzj}.

\subsection{Coupled Klein-Gordon equations: interacting dark matter and dark energy fields}

The preceding section demonstrated that the linearized KG equation for a single dark matter scalar field can be transformed into a differential equation whose regular solution is the confluent hypergeometric limit function ${}_0F_1$. While this provides a useful analytical description of single-field dynamics, many cosmological models allow interactions between different dark-sector components. In particular, coupling dark matter and dark energy through interacting scalar fields can significantly alter the evolution of the Universe. We therefore extend the analysis to a system of coupled KG equations and investigate whether exact analytical solutions can still be obtained using the theory of special functions. 

In cosmology, a system of coupled Klein-Gordon equations representing two scalar fields~\cite{Aboubrahim:2026gyp,Aboubrahim:2025usl,Aboubrahim:2024spa,Aboubrahim:2024cyk,Rahimy:2025iyj,Teixeira:2024qmw} has been used in an attempt to explain the different anomalies observed today. These anomalies represent differences between theory predictions and experimental measurements~\cite{CosmoVerseNetwork:2025alb,Abdalla:2022yfr}. It is therefore of interest to consider this system of interacting (coupled) KG equations to better understand the dynamics of the two fields as they evolve with time.   

Consider two scalar fields $\chi$ and $\phi$ whose coupled Klein-Gordon equations are given by (in cosmic time):
\begin{align}
\begin{split}
        &\ddot{\chi}+3H\dot{\chi}+\frac{\dd V_n(\chi)}{\dd\chi}=-\frac{Q}{\dot{\chi}}\,, \\
    &\ddot{\phi}+3H\dot{\phi}+\frac{\dd V(\phi)}{\dd\phi}=\frac{Q}{\dot{\phi}}\,,
    \end{split}
    \label{eq:CoupledKG}
\end{align}
where the potentials are given by
\begin{align}
    \label{p1}
    V_n(\chi)&=\Lambda_\chi^4\Big[1-\cos(\chi/F)\Big]^n\,, \\
    V(\phi)&=\Lambda_\phi^4\Big[1+\cos(\phi/f)\Big]\,.
    \label{p2}
\end{align}
Again, $\Lambda_\chi$, $\Lambda_\phi$, $F$ and $f$ are real and positive constants. 

\begin{theorem}[Hypergeometric solution of the coupled Klein--Gordon system] Let  $ M_{\phi}^{2} = M_{\chi}^{2} \equiv M^{2}$  and  $\gamma_{\phi} = \gamma_{\chi} \equiv \gamma$. Then  the solution to the coupled Klein--Gordon system \eqref{eq:CoupledKG} is
\begin{align} 
\begin{split}
    X(x) &= X_i - \beta(X_i) + \operatorname{Im} \bigl( u(x) \bigr), \\
    \Phi(x) &= \Phi_i - \tan(\Phi_i) + \operatorname{Re} \bigl( u(x) \bigr),
\end{split} \end{align} 
where
\begin{equation*}
    u(x) = C e^{-i(k+\gamma/2)x}{}_1F_1 \left( A; 3p; 2ikx \right)\,,
\end{equation*}
with $ k = \sqrt{ M^{2} + \dfrac{\gamma^{2}}{4} }$ and $A = \dfrac{3p}{2k} \left( k+\dfrac{\gamma}{2} \right)$. Here, $X_i$ and $\Phi_i$ represents the initial values of the fields $X$ and $\Phi$.
\end{theorem}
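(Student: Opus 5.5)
The plan is to linearize both Klein--Gordon equations about their initial values, exactly as in the single-field case, and then combine the two real equations into one complex second-order ODE. That ODE is reduced to Kummer's equation \eqref{eq:kummer} by an exponential substitution followed by a linear rescaling of the variable.

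\textbf{Step 1: dimensionless form and linearization.} First I would pass to dimensionless variables as in the single-field section: $X\equiv\chi/F$, $\Phi\equiv\phi/f$, $x\equiv H_0t$, $H=H_0p/x$. For the coupling I would assume (and make explicit) the form $Q\propto\dot\chi\,\dot\phi$. Then $-Q/\dot\chi$ and $Q/\dot\phi$ become $-\gamma_\chi\Phi'$ and $+\gamma_\phi X'$, and $\gamma_\chi,\gamma_\phi$ absorb the ratios of $F$, $f$ and $H_0$. This identification of $Q$ is the modelling input I would have to fix first.

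Next I Taylor expand $\dd V_n/\dd X$ about $X_i$ to first order in $\delta X$, exactly as before. This gives $M_\chi^2\,(\beta(X_i)+\delta X)$ with the same $\beta$ and $M_\chi^2$ as in the single-field section. For \eqref{p2}, $\dd V/\dd\Phi\propto-\sin\Phi$ and $\dd^2V/\dd\Phi^2\propto-\cos\Phi$. Their ratio at $\Phi_i$ is $\tan\Phi_i$, which explains the shift $\tan(\Phi_i)$ in the statement. Setting $\tilde X=\beta(X_i)+\delta X$ and $\tilde\Phi=\tan\Phi_i+\delta\Phi$, the linear system reads
\[
\tilde X''+\tfrac{3p}{x}\tilde X'+M_\chi^2\tilde X=-\gamma_\chi\tilde\Phi',\qquad
\tilde\Phi''+\tfrac{3p}{x}\tilde\Phi'+M_\phi^2\tilde\Phi=\gamma_\phi\tilde X'.
\]

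\textbf{Step 2: complexification.} Under the hypotheses $M_\phi=M_\chi=M$ and $\gamma_\phi=\gamma_\chi=\gamma$, I set $u=\tilde\Phi+i\tilde X$. Adding $i$ times the first equation to the second gives
\[
u''+\Bigl(\tfrac{3p}{x}+i\gamma\Bigr)u'+M^2u=0,
\]
since $\gamma\tilde X'-i\gamma\tilde\Phi'=-i\gamma u'$. The real and imaginary parts of $u$ then return $\Phi$ and $X$ with the stated shifts.

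\textbf{Step 3: reduction to Kummer's equation.} I substitute $u=e^{\lambda x}w$. This gives
\[
w''+\Bigl(2\lambda+i\gamma+\tfrac{3p}{x}\Bigr)w'+\Bigl(\lambda^2+i\gamma\lambda+M^2+\tfrac{3p\lambda}{x}\Bigr)w=0 .
\]
I choose $\lambda$ to kill the constant part of the coefficient of $w$: $\lambda^2+i\gamma\lambda+M^2=0$, so $\lambda=-i(k+\gamma/2)$ with $k=\sqrt{M^2+\gamma^2/4}$. Then $2\lambda+i\gamma=-2ik$. Multiplying by $x$ and setting $z=2ikx$ gives
\[
z\,w_{zz}+(3p-z)\,w_z-A\,w=0,\qquad A=-\frac{3p\lambda}{2ik}=\frac{3p}{2k}\Bigl(k+\frac{\gamma}{2}\Bigr).
\]
This is \eqref{eq:kummer} with $a=A$ and $b=3p$. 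Its solution regular at $z=0$ is ${}_1F_1(A;3p;z)$, defined by \eqref{eq:1f1series}; this is valid since $3p\notin\mathbb Z_{\le0}$. I discard the second solution $U$, or equivalently $z^{1-b}{}_1F_1$, because it is singular at $x=0$ for $p=1/2$ and $p=2/3$.

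\textbf{Step 4: initial data.} The complex constant $C$ is fixed by requiring $X\to X_i$ and $\Phi\to\Phi_i$ as $x\to0$. Since ${}_1F_1(A;3p;0)=1$, this gives $C=\tan\Phi_i+i\beta(X_i)$, mirroring how $C=\beta$ was found in the single-field case.

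The main obstacle is Step 1, not the special-function algebra. One must justify the specific form of $Q$ that makes the coupling linear in the velocities with opposite signs, since exactly this structure makes the complex combination $u$ decouple. One must also make clear that the equal-parameter assumptions are what allow a single complex equation.
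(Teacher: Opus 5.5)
Your proposal is correct and follows essentially the same route as the paper. It uses the same choice $Q=\lambda\dot\chi\dot\phi$, the same linearization with shifts $\beta(X_i)$ and $\tan\Phi_i$, and the same complex combination $u=y_\phi+i\,y_\chi$ obeying $u''+(3p/x+i\gamma)u'+M^2u=0$; it then uses the same exponential factor $e^{-i(k+\gamma/2)x}$ (the paper's root $\kappa=-\gamma/2-k$), the same rescaling $z=2ikx$, discards the solution singular at $x=0$, and sets $C=\tan\Phi_i+i\beta(X_i)$. Only one small point needs fixing: for $p=2/3$ you have $b=2$, where $z^{1-b}\,{}_1F_1$ is undefined, so justify discarding the second solution through $U$ alone.
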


\begin{proof}
We use a source term of the form $Q=\lambda\dot{\chi}\dot{\phi}$~\cite{Garcia-Arroyo:2024tqq} which implies that the KG equations now become
\begin{align}
\label{kg1}
    &\ddot{\chi}+3H\dot{\chi}+\frac{\dd V_n(\chi)}{\dd\chi}=-\lambda\,\dot{\phi}\,, \\
    &\ddot{\phi}+3H\dot{\phi}+\frac{\dd V(\phi)}{\dd\phi}=\lambda\,\dot{\chi}\,,
\label{kg2}
\end{align}
where $\lambda>0$ is the coupling constant. 

For each respective field, we define the following dimensionless quantities:
\begin{align*}
        &\Phi\equiv\phi/f, ~\quad m_\phi\equiv \Lambda^2_\phi/f, ~\quad \alpha_\phi\equiv f/M_p, \quad ~x\equiv H_0 t, \quad \text{and} \quad \mu_\phi\equiv m_\phi/H_0, \\
        &X\equiv \chi/F, \quad m_\chi\equiv \Lambda^2_\chi/F, \quad \alpha_\chi\equiv F/M_p, \quad ~x\equiv H_0 t, \quad \text{and} \quad \mu_\chi\equiv m_\chi/H_0,
    \end{align*}
These changes in variables morph \eqref{kg1} into
\begin{equation}
    X^{\prime\prime}+3\frac{p}{x}X^\prime+\alpha_\chi^{-2}\frac{\dd V_n(X)}{\dd X}=-\lambda\frac{\alpha_\phi}{\alpha_\chi}\Phi^\prime,
    \label{Dless kg1}
\end{equation}
and \eqref{kg2} into
\begin{equation}
    \Phi^{\prime\prime}+3\frac{p}{x}\Phi^\prime+\alpha^{-2}_\phi\frac{\dd V(\Phi)}{\dd\Phi}=\lambda\frac{\alpha_\chi}{\alpha_\phi}X^\prime.
    \label{Dless kg2}
\end{equation}
As usual, the primes represent derivatives with respect to the independent variable $x$.
We continue by finding the derivatives of the potentials in \eqref{p1} and \eqref{p2},
\begin{align*}
\frac{\dd V(\Phi)}{\dd\Phi} &= -\mu_\phi^{2}\alpha_\phi^{2}\sin{\Phi}, \\
\frac{\dd^2V(\Phi)}{\dd\Phi^2} &= -\mu_\phi^{2}\alpha_\phi^{2}\cos{\Phi}, \\
\frac{\dd V_n(X)}{\dd X} &= \mu_\chi^{2}\alpha_\chi^{2}n\sin{X}(1-\cos{X})^{n-1}, \\
\frac{\dd^2 V_n(X)}{\dd X^2} &= \mu_\chi^{2}\alpha_\chi^{2}n(n-1)\sin^2{X}(1-\cos{X})^{n-2}+\mu_\chi^{2}\alpha_\chi^{2}n\cos{X}(1-\cos{X})^{n-1} \\
&= \frac{\dd V_n(X)}{\dd X}\left(\frac{n+n\cos{X}-1}{\sin{X}}\right).  
\end{align*}
After an expansion of the potentials around their initial values $X_i~\text{and}~\Phi_i$, we get
\begin{equation*}
    \frac{\dd V(\Phi_i)}{\dd\Phi} \approx \frac{\dd^2 V(\Phi_i)}{\dd\Phi^2}(\tan{\Phi_i}+\delta\Phi)\,,
\end{equation*}
and
\begin{equation*}
    \frac{\dd V_n(X_i)}{\dd X} \approx \frac{\dd^2V_n(X_i)}{\dd X^2} \left(\frac{\sin{X_i}}{n+n\cos{X_i}-1}+\delta X\right).
\end{equation*}
Here, $\delta\Phi = \Phi-\Phi_i$ and $\delta X=X-X_i$. Now \eqref{Dless kg1} and \eqref{Dless kg2} become
\begin{equation*}
\delta X^{\prime\prime}+3\frac{p}{x}\delta X^\prime+\alpha_\chi^{-2}\frac{\dd^2V_n(X_i)}{\dd X^2}\left(\frac{\sin{X_i}}{n+n\cos{X_i}-1}+\delta X\right) = -\gamma_\chi\delta\Phi^\prime \,,
\end{equation*}
and
\begin{equation*}
\delta\Phi^{\prime\prime}+3\frac{p}{x}\delta\Phi^\prime+\alpha_\phi^{-2}\frac{\dd^2V(\Phi_i)}{\dd\Phi^2}(\tan{\Phi_i}+\delta\Phi) = \gamma_\phi\delta\chi^\prime\,,
\end{equation*}
where we have defined the interaction terms as
\begin{equation}
\gamma_\chi \equiv \lambda\frac{\alpha_\phi}{\alpha_\chi}\,, \quad \text{and} \quad \gamma_\phi\equiv\lambda\frac{\alpha_\chi}{\alpha_\phi}\,.    
\label{gammaint}
\end{equation}
We define the mass terms
\begin{align*}
M_\chi^2 &\equiv \alpha_\chi^{-2}\frac{\dd^2V_n(X_i)}{\dd X^2},\\
M_\phi^2 &\equiv \alpha_\phi^{-2}\frac{\dd^2V(\Phi_i)}{\dd\Phi^2},
\end{align*}
and the new variables
\begin{align*}
y_\chi(x) &=\delta X +\frac{\sin{X_i}}{n+n\cos{X_i}-1}\,, \\
y_\phi(x)&= \delta\Phi+\tan{\Phi_i}\,,    
\end{align*}
which allows us to cast the differential equations in the forms
\begin{equation}
    y_\phi^{\prime\prime}+3\frac{p}{x}y^\prime_\phi+M_\phi^2 y_\phi=\gamma_\phi y_\chi^\prime\,,
    \label{yphi}
\end{equation}
and
\begin{equation}
y_\chi^{\prime\prime}+3\frac{p}{x}y^\prime_\chi+M_\chi^2 y_\chi=-\gamma_\chi y_\phi^\prime\,.
\label{ychi}
\end{equation}
 We then define a complex-valued function $u(x) =y_\phi(x) +i\,y_\chi(x)$, where $i$ is the imaginary unit. Under our assumptions $M_\phi^2=M_\chi^2\equiv M^2,~\text{and}~\gamma_\phi=\gamma_\chi\equiv\gamma$, multiplying \eqref{ychi} by the imaginary unit $i$ and adding to \eqref{yphi} results in 
\begin{equation}
u''+\left(3\frac{p}{x}+i\gamma\right)u'+M^2u=0.
\label{ueq}
\end{equation}
The hypergeometric function of the first kind $_1F_1(A;b;z)$ satisfies~(\ref{eq:kummer}). We turn \eqref{ueq} into the form of Kummer's equation, \eqref{eq:kummer}, by letting 
\begin{equation*}
u = e^{i\kappa x}f(z(x))\,,    
\end{equation*}
with $z(x)=\xi x^n$, and where $\kappa$, $\xi$ and $n$ are constants to be determined. Then
\begin{align*}
u^\prime&=e^{i\kappa x}\frac{\dd f}{\dd z}\frac{\dd z}{\dd x}+i\kappa e^{i\kappa x}f(z)\,, \\
u^{\prime\prime}&=e^{i\kappa x}\frac{\dd^2 f}{\dd z^2}\left(\frac{\dd z}{\dd x}\right)^2+2i\kappa e^{i\kappa x}\frac{\dd f}{\dd z}\frac{\dd z}{\dd x}-\kappa^2 e^{i\kappa x}f(z).
\end{align*}
Substituting $u^\prime$ and $u^{\prime\prime}$ into \eqref{ueq} gives
\begin{align}
&\frac{\dd^2 f}{\dd z^2}\xi^2 n^2 x^{2(n-1)}+\frac{\dd f}{\dd z}\left[\xi n(n-1)x^{n-2}+\xi n x^{n-1}\left(3\frac{p}{x}+2i\kappa+i\gamma\right)\right] \nonumber \\
&\hspace{9.67cm}+\left(M^2+3\frac{p}{x}i\kappa-\gamma \kappa-\kappa^2\right)f=0 \nonumber \\
&\frac{\dd^2 f}{\dd z^2}\xi^2 n^2 x^{2(n-1)}+\frac{\dd f}{\dd z}\xi nx^{n-2}\big(n-1+3p+2i\kappa x+i\gamma x\big)+\left(M^2+3\frac{p}{x}i\kappa-\gamma \kappa-\kappa^2\right)f=0 \nonumber \\
&\frac{\dd^2 f}{\dd z^2} \xi x^n+\frac{\dd f}{\dd z}(n-1+3p+2i\kappa x+i\gamma x)+ \frac{x^{1-n}}{\xi n}\Big[(M^2-\gamma \kappa-\kappa^2)x+3ip\kappa\Big]f=0\,.
\label{eqf}
\end{align}
We extract the values for $n$, $\kappa$ and $\xi$ such that \eqref{eqf} takes the form of \eqref{eq:kummer}. We begin by observing that $M^2-\gamma \kappa-\kappa^2$ must vanish, that is,
\begin{equation*}
M^2-\gamma \kappa-\kappa^2=0 \quad\text{ or }\quad \kappa=-\frac{\gamma}{2}\mp \sqrt{M^2+\frac{\gamma^2}{4}}\equiv -\frac{\gamma}{2}\mp k\,.
\end{equation*}
We choose the negative root of $\kappa$ and by setting $n=1$, we get
\begin{align*}
\xi x\frac{\dd^2 f}{\dd z^2}+(3p-2ikx)\frac{\dd f}{\dd z}-i\frac{3p}{\xi}\left(\frac{\gamma}{2}+k\right)f&=0.
\end{align*}
Finally, by letting $\xi \equiv 2ik$, we arrive at
\begin{align}
z\frac{\dd^2 f}{\dd z^2}+(3p-z)\frac{\dd f}{\dd z}-\frac{3p}{2k}\left(\frac{\gamma}{2}+k\right)f&=0\,.
\label{coupled_solution}
\end{align}
We can identify from \eqref{coupled_solution} and \eqref{eq:kummer} that 
\begin{equation*}
b\equiv 3p\,, \quad \text{and} \quad A\equiv \frac{3p}{2k}\left(\frac{\gamma}{2}+k\right)\,.
\end{equation*}
Therefore, the general solution is
\begin{equation*}
f(z) = C_1\, {}_1F_1(A;3p;z) + C_2\, U(A;3p;z)\,,
\end{equation*}
where $U(A;3p;z)$ is the Tricomi function~(\ref{eq:tricomi}). 
Hence
\begin{equation*}
u(x)=e^{-i\left(k+\gamma/2\right)x}\Big[C_1\,{}_1F_1\!\left(A;3p;2ikx\right)+C_2\, U\!\left(A;3p;2ikx\right)\Big].
\end{equation*}
Imposing regularity at the origin eliminates the Tricomi function $U(A;3p;2ikx)$ which blows up when $x\to 0$. The function ${}_1F_1(A;3p;2ikx)\to 1$ as $x\to 0$. Therefore, the regular solution is
\begin{equation*}
u(x)=C\,e^{-i\left(k+\gamma/2\right)x}\,{}_1F_1\!\left(A;3p;2ikx\right)\,,
\end{equation*}
where $C=K_1+iK_2$.  
The two real fields are then
\begin{align}
y_\chi(x) &= \Im\!\bigl[u(x)\bigr]=\beta(X_i)+\delta X=\beta(X_i)+X-X_i\,, \\
y_\phi(x) &= \Re\!\bigl[u(x)\bigr]=\tan\Phi_i+\delta\Phi=\tan\Phi_i+\Phi-\Phi_i\,.
\end{align}
The real constants of integration $K_1$ and $K_2$ can be determined from initial conditions: at $x=0$, $X=X_i$ and $\Phi=\Phi_i$, so for $X$, we get $K_2=\beta(X_i)$ and for $\Phi$ we get $K_1=\tan\Phi_i$. Therefore, the function $u(x)$ becomes
\begin{equation*}
u(x)=\Big[\tan\Phi_i+i\beta(X_i)\Big]\,e^{-i\left(k+\gamma/2\right)x}\,{}_1F_1\!\left(A;3p;2ikx\right)\,,
\end{equation*}
and our dark matter and dark energy fields evolve as
\begin{align}
\label{eq:xfinal}
X(x)&=X_i-\beta(X_i)+\Im\!\bigl[u(x)\bigr]\,, \\
\Phi(x)&=\Phi_i-\tan\Phi_i+\Re\!\bigl[u(x)\bigr]\,.
\label{eq:phifinal}
\end{align}
where
\begin{align}
    \beta(X_i)&=\frac{\sin X_i}{n+n\cos X_i-1}\,.
    \label{beta}
\end{align}
\end{proof}

The theorem above establishes that under certain conditions the coupled dark matter--dark energy system possesses, at early times, an exact analytical solution in terms of Kummer's confluent hypergeometric function ${}_1F_1$. Having obtained closed-form expressions for the interacting fields, it is natural to investigate their physical behavior in limiting regimes. In cosmological applications, particularly important cases correspond to very weak coupling and very strong coupling, since these limits determine whether the dark sectors evolve nearly independently or as a single effective system. We therefore analyze the asymptotic behavior of the solutions in these two regimes.

\begin{figure}[t]
    \centering
    \includegraphics[width=1\linewidth]{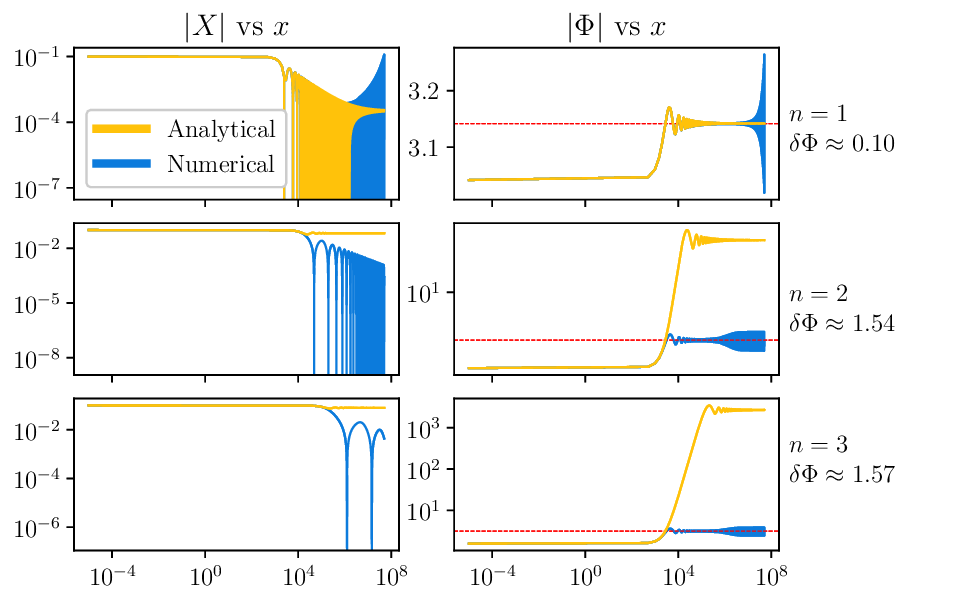}
    \caption{Evolution of the fields $X$ and $\Phi$ with the time parameter $x$ for the cases $n=1$ (top panels), $n=2$ (middle panels) and $n=3$ (bottom panels). Here, the coupling is turned off, i.e., $\lambda=0$ and the initial field value is $X_i=0.1$. The dashed red lines indicate $\Phi=\pi$.}
    \label{fig:lambda 0 Xi 0.1}
\end{figure}

\begin{figure}[t]
    \centering
    \includegraphics[width=1\linewidth]{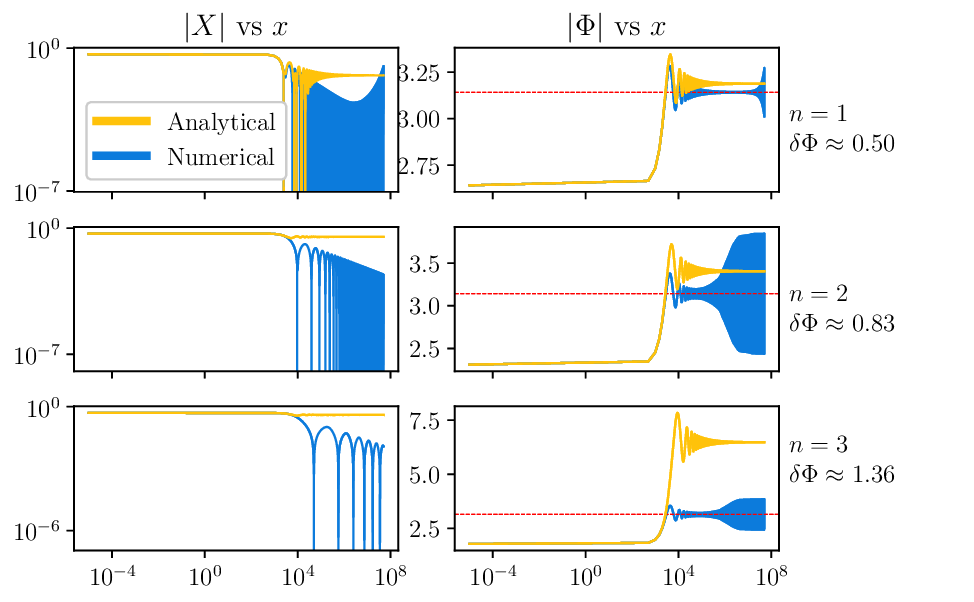}
    \caption{Same as in Fig.~\ref{fig:lambda 0 Xi 0.1} but for $X_i=0.5$. }
    \label{fig:lambda 0 Xi 0.5}
\end{figure}

\subsection{Asymptotic behavior of the fields}

We discuss the asymptotic nature of $X(x)$ and $\Phi(x)$ for $\gamma\to 0$ and $\gamma\to\infty$. 

We first consider the case of very large coupling, $\gamma\gg M$ (or $\gamma\to\infty$). We get
\begin{equation}
    u(x)=\tan\Phi_i+i\beta(X_i)\,.
    \label{ulimit}
\end{equation}

For the case of zero or very small coupling ($\gamma\to 0$ or $\gamma\ll M$), we get using~(\ref{eq:f11bessel})
\begin{equation*}
u(x)=\Big[\tan\Phi_i+i\beta(X_i)\Big]\,\Gamma\left(\frac{3p+1}{2}\right)\left(\frac{Mx}{2}\right)^{\frac{1-3p}{2}}J_{\frac{3p-1}{2}}(Mx)\,,
\end{equation*}
where $J_{\frac{3p-1}{2}}$ is the Bessel function of the first kind~(\ref{eq:besselrep}) of fractional order $(3p-1)/2$ and $\Gamma(a)$ is the gamma function~(\ref{eq:gamma}).
In this limit, $\gamma\to 0$ and for $Mx>0$, the field solutions~(\ref{eq:xfinal}) and~(\ref{eq:phifinal}) become
\begin{align}
X(x)&=X_i+\beta(X_i)\left[-1+\Gamma\left(\frac{3p+1}{2}\right)\left(\frac{Mx}{2}\right)^{\frac{1-3p}{2}}J_{\frac{3p-1}{2}}(Mx)\right]\,, \\
\Phi(x)&=\Phi_i+\tan\Phi_i\left[-1+\Gamma\left(\frac{3p+1}{2}\right)\left(\frac{Mx}{2}\right)^{\frac{1-3p}{2}}J_{\frac{3p-1}{2}}(Mx)\right]\,.
\end{align}
Note that in the limit $x\to 0$ we have
\begin{equation*}
    \Gamma\left(\frac{3p+1}{2}\right)\left(\frac{Mx}{2}\right)^{\frac{1-3p}{2}}J_{\frac{3p-1}{2}}(Mx)\longrightarrow 1+\mathcal{O}(Mx)\,,
\end{equation*}
which means that $X(0)\to X_i$ and $\Phi(0)\to\Phi_i$ as expected.

\section{Numerical analysis}

In this section, we give a numerical analysis exhibiting the evolution of the fields $X$ and $\Phi$ with cosmic time, represented by the independent variable $x$. Our aim is to show the ranges over which the analytical and numerical solutions of the KG equations agree and where deviations begin to appear. Recall that the analytical solutions were obtained by Taylor expanding the potentials $V_n(\chi)$~(\ref{p1}) and $V(\phi)$~(\ref{p2}) around small field excursions, i.e., $\delta X=X-X_i\ll X_i$ and $\delta\Phi=\Phi-\Phi_i\ll \Phi_i$. Once the fields overcome the friction term (represented by the Hubble parameter), they will begin to move away from their initial values. We expect disagreements between the analytical and numerical solutions when the fields stray far from their initial values, while the two solutions should track each other as long as $\delta X$ and $\delta\Phi$ remain small enough. The evolution of the fields $X$ and $\Phi$ in Figs.~\ref{fig:lambda 0 Xi 0.1}$-$\ref{fig:lambda 1000 Xi 0.1} show the analytical solutions of~(\ref{eq:xfinal}) and~(\ref{eq:phifinal}) and the exact numerical solutions of~(\ref{Dless kg1}) and~(\ref{Dless kg2}) for a number of cases with different values of $n$ and the interaction (coupling) strength $\lambda$. Note that since we take $\alpha_\phi=\alpha_\chi$, then from~(\ref{gammaint}), we have $\gamma\equiv\lambda$. In all of the figures, we take $p=1/2$ (for radiation domination).

\begin{figure}[t]
    \centering
    \includegraphics[width=1\linewidth]{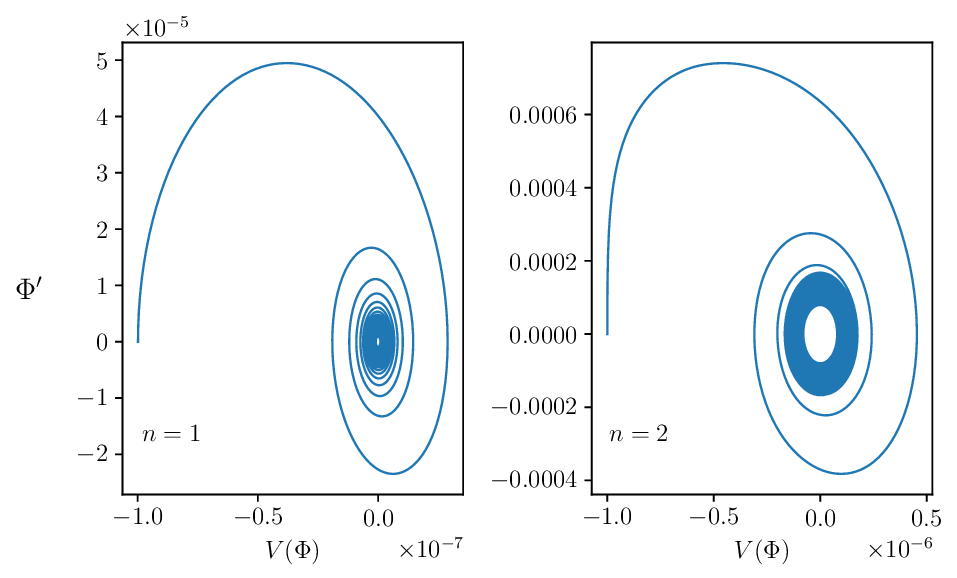}
    \caption{The phase space diagram showing the field velocity $\Phi^\prime$ versus the potential $V(\Phi)$ for the $n=1$ (left panel) and $n=2$ (right panel) cases when the interaction is switched off. The characteristic spirals seen in both cases signal the presence of a fixed point or an attractor solution for the KG equation of $\Phi$. }
    \label{fig:Phiprime vs Potential_n_1}
\end{figure}

\begin{figure}[t]
    \centering
    \includegraphics[width=1\linewidth]{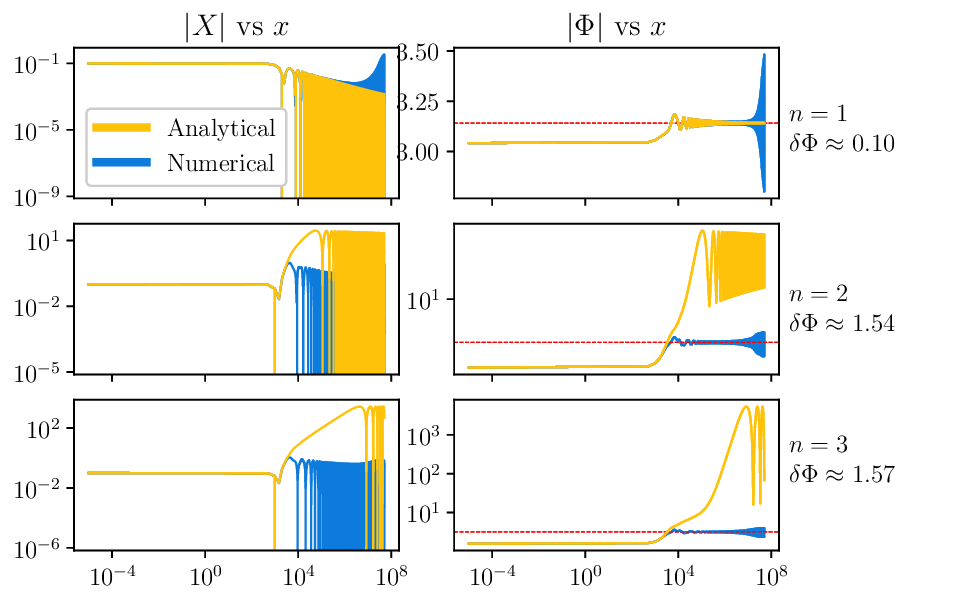}
    \caption{Same as in Fig.~\ref{fig:lambda 0 Xi 0.1} but for $\lambda=10^{-3}$.  }
    \label{fig:lambda 0.001 Xi 0.1}
\end{figure}

\begin{figure}[t]
    \centering
    \includegraphics[width=1\linewidth]{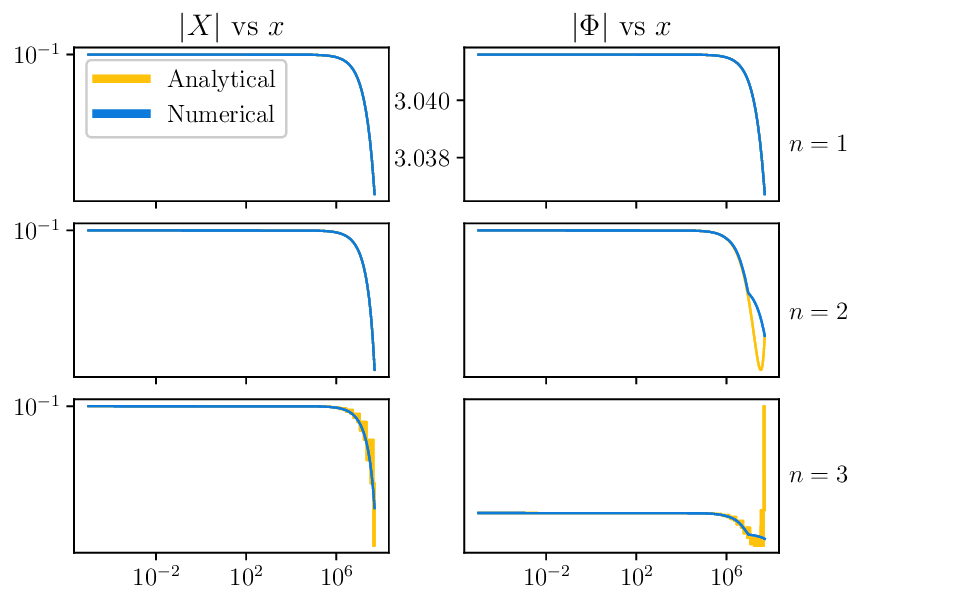}
    \caption{Same as in Fig.~\ref{fig:lambda 0 Xi 0.1} but for $\lambda=10^{3}$.}
    \label{fig:lambda 1000 Xi 0.1}
\end{figure}

We begin with the case of no interaction, i.e., $\lambda=0$ and with an initial field value $X_i=0.1$. The evolutions of the fields $X$ and $\Phi$ in Fig.~\ref{fig:lambda 0 Xi 0.1} are shown for the cases $n=1$ (top panels), $n=2$ (middle panels) and $n=3$ (bottom panels). For the $n=1$ case, the analytical and numerical solutions track each other almost perfectly over the entire time range, since $\delta\Phi\approx 0.1\ll \Phi_i\approx 3.0$. The apparent deviations seen in the late time are primarily due to the onset of rapid oscillations that the fields undergo~\cite{Urena-Lopez:2015gur}. For example, in the case of the $\Phi$ field, the oscillations in the numerical solution average out and converge to the analytical solution. For the $n=2$ and $n=3$ cases in Fig.~\ref{fig:lambda 0 Xi 0.1}, the numerical and analytical solutions show an agreement up to $x\sim 10^3$, after which they diverge away from each other, with $\delta\Phi\approx 1.54$ and $\delta\Phi\approx 1.57$, respectively. The initial field values for $n=2$ and $n=3$ are $\Phi_i\approx 1.6$ and $\Phi_i\approx 1.57$, which are clearly equal to or comparable to $\delta\Phi$, invalidating the accuracy of Taylor expansion of the potential. Although there is no coupling in this case, our assumption of equal masses, $M_\chi^2=M_\phi^2$, imposes a dependency of $\Phi_i$ on $X_i$ and $n$, which explains why different $\Phi_i$ values are obtained for the same choice of $X_i$. 

Compared to the choice of $X_i=0.1$, changing the initial field value to $X_i=0.5$ introduces a slight deviation between the analytical and numerical solutions for the $n=1$ case as seen in Fig.~\ref{fig:lambda 0 Xi 0.5}, where the field excursion has risen to $\delta\Phi\approx 0.5$. Deviations between the two solutions for $X$ and $\Phi$ are visible around $x\sim 10^3$ for all cases, $n=1$, $n=2$ and $n=3$. However, compared to the previous case, deviations in the $n=2$ and $n=3$ cases remain almost identical for the field $X$, while for $\Phi$, they haven been tamed and the effect is very stark in the $n=3$ case, where previously (for $X_i=0.1$) the deviation in $\Phi$ was almost two order of magnitude, while for the $X_i=0.5$, it has become an $\mathcal{O}(1)$ effect. The reason for this can be understood by realizing that no matter what the initial value of $\Phi$ is, the numerical solution always converges to $\pi$. The system has an attractor solution~\cite{Amendola:1999dr} (or a fixed point) which can be seen in the phase diagram depicting the field velocity $\Phi^\prime$ versus the potential $V(\Phi)$ as shown in Fig.~\ref{fig:Phiprime vs Potential_n_1} for the $n=1$ (left panel) and $n=2$ (right panel) cases. It is clear that the solution converges to $V(\Phi)=0$, where $V(\Phi)=\mu_\phi^2\alpha_\phi^2(1+\cos\Phi)$, which indeed gives $\Phi=\pi$ for $V(\Phi)=0$. So, higher $\Phi_i$ values mean smaller $\delta\Phi$, which explains why the numerical and analytical solutions do not deviate widely apart as in the case shown in Fig.~\ref{fig:lambda 0 Xi 0.1}. Here, too, the deviation is mainly driven by the value of $n$. 

We now turn our attention to the case of weak coupling, by setting $\lambda=10^{-3}$. The evolution of the fields $X$ and $\Phi$ for an initial field value $X_i=0.1$ are shown in Fig.~\ref{fig:lambda 0.001 Xi 0.1}. For $n=1$ (top panel), a very good agreement between the analytical and numerical solutions persists for both $X$ and $\Phi$, with the analytical solution showing more violent oscillations compared to the no-interaction $(\lambda=0)$ case. Since the initial value $\Phi_i$ is dependent only on $n$ and $X_i$, there is no change in the initial condition and the field-excursions for all cases remain identical to those in Fig.~\ref{fig:lambda 0 Xi 0.1}. Again, for the $n=2$ and $n=3$ cases, oscillations in the analytical solutions for $X$ and $\Phi$ become more prominent and, noticeably, deviations between the analytical and numerical solutions begin earlier than for the $\lambda=0$ case. Furthermore, the numerical solution of $\Phi$ still converges, on the average, to $\pi$, with a slight asymmetry observed around the $\Phi=\pi$ line.   

For large $\lambda$, (\ref{ueq}) reveals that the coupling $\gamma$ (which is the same as $\lambda$) contributes to the friction term giving an effective Hubble parameter $H_{\rm eff}\equiv 3p/x+i\gamma$. For zero or weak coupling $\gamma$, the term is large for small $x$ and small for large $x$, i.e., the Hubble friction dominates the early time. However, with the presence of a large $\gamma$, $H_{\rm eff}$ is dominated by $3p/x$ for small $x$ and by $\gamma$ for large $x$. Therefore, the Hubble friction does not wane as the universe expands. As a result, we expect the field excursions of $X$ and $\Phi$ away from their initial values to be very minimal. This is indeed what we see in Fig.~\ref{fig:lambda 1000 Xi 0.1}. Recall that in the large $\gamma$ limit, the solution converges to~(\ref{ulimit}) which exhibits no oscillatory behavior and is only a function of the initial field values. The analytical and numerical solutions for both $X$ and $\Phi$ exactly track each other with no noticeable deviations, except at the tail-end where numerical fluctuations exist. These fluctuations are not significant; they are enhanced by the log scale in the plots. 

In summary, the hypergeometric solution to the coupled system of KG equations remains valid as long as field-excursions away from the initial field values remain small compared to these initial values. The case $n=1$ shows the best agreement between the numerical and analytical solutions over most of the considered time range, while an agreement between the two solutions is observed over a limited $x$ range for the $n=2$ and $n=3$ cases. A full agreement between the two solutions is possible for the case of very large coupling.

\section{Conclusion}

In this work, we revisited the analytical solution of the Klein-Gordon equation with the small-field excursions and extended the analysis to a system of two coupled Klein-Gordon equations. Under the assumption that field excursions remain small relative to their initial values, we showed that the coupled system admits solutions expressed in terms of Kummer's confluent hypergeometric function.

To assess the validity of the analytical approach, we performed a full numerical integration of the coupled system and compared the numerical solutions with the corresponding analytical expressions. The comparison shows that the hypergeometric solutions remain valid as long as the field excursions from their initial values are sufficiently small. Among the cases considered, the solution with $n=1$ exhibits the best agreement with the numerical results over most of the evolution range. For $n=2$ and $n=3$, agreement is maintained over a more restricted range, reflecting the limitations of the underlying approximation. We also found that the agreement between analytical and numerical solutions improves significantly with increasing coupling strength, with nearly complete agreement attainable for the case of very large coupling.

These results demonstrate that hypergeometric functions provide a robust analytical framework for describing coupled scalar-field dynamics, yielding accurate approximations and offering valuable insight into the behavior of systems that are otherwise accessible primarily through numerical methods.

\vspace{1cm}

{\bf Acknowledgments:} 
The research of AA and TY is supported in part by the NASA CT Space Grant Prosum number P-2590.

\bibliography{references.bib}

@article{Aboubrahim:2024spa,
    author = "Aboubrahim, Amin and Nath, Pran",
    title = "{Interacting ultralight dark matter and dark energy and fits to cosmological data in a field theory approach}",
    eprint = "2406.19284",
    archivePrefix = "arXiv",
    primaryClass = "astro-ph.CO",
    doi = "10.1088/1475-7516/2024/09/076",
    journal = "JCAP",
    volume = "09",
    pages = "076",
    year = "2024"
}

@article{Aboubrahim:2024cyk,
    author = "Aboubrahim, Amin and Nath, Pran",
    title = "{Upper limits on dark energy-dark matter interaction from DESI DR2 in a field-theoretic analysis}",
    eprint = "2411.11177",
    archivePrefix = "arXiv",
    primaryClass = "astro-ph.CO",
    doi = "10.1088/1475-7516/2025/10/081",
    journal = "JCAP",
    volume = "10",
    pages = "081",
    year = "2025"
}

@article{Aboubrahim:2025usl,
    author = "Aboubrahim, Amin and Giman, Andrew H. and Nath, Pran",
    title = "{Cosmology of axion dark energy in supersymmetric models and constraints on high-scale parameters}",
    eprint = "2511.22559",
    archivePrefix = "arXiv",
    primaryClass = "astro-ph.CO",
    doi = "10.1103/3s8n-j6tq",
    journal = "Phys. Rev. D",
    volume = "114",
    number = "2",
    pages = "023501",
    year = "2026"
}

@article{Poulin:2018dzj,
    author = "Poulin, Vivian and Smith, Tristan L. and Grin, Daniel and Karwal, Tanvi and Kamionkowski, Marc",
    title = "{Cosmological implications of ultralight axionlike fields}",
    eprint = "1806.10608",
    archivePrefix = "arXiv",
    primaryClass = "astro-ph.CO",
    doi = "10.1103/PhysRevD.98.083525",
    journal = "Phys. Rev. D",
    volume = "98",
    number = "8",
    pages = "083525",
    year = "2018"
}

@article{Tsujikawa:2013fta,
    author = "Tsujikawa, Shinji",
    title = "{Quintessence: A Review}",
    eprint = "1304.1961",
    archivePrefix = "arXiv",
    primaryClass = "gr-qc",
    doi = "10.1088/0264-9381/30/21/214003",
    journal = "Class. Quant. Grav.",
    volume = "30",
    pages = "214003",
    year = "2013"
}

@article{Jr:2021rnt,
    author = "Jr., W. N. Mathews and Esrick, M. A. and Teoh, Z. Y. and Freericks, J. K.",
    title = "{A physicist's guide to the solution of Kummer's equation and confluent hypergeometric functions}",
    eprint = "2111.04852",
    archivePrefix = "arXiv",
    primaryClass = "quant-ph",
    doi = "10.5488/CMP.25.33203",
    journal = "Condensed Matter Phys.",
    volume = "25",
    number = "3",
    pages = "2022",
    year = "2022"
}

@article{Aboubrahim:2026gyp,
    author = "Aboubrahim, Amin and Nath, Pran",
    title = "{Automatic detection of fast oscillations of dark matter scalar field and updated cosmological constraints on QCDM}",
    eprint = "2608.13346",
    archivePrefix = "arXiv",
    primaryClass = "astro-ph.CO",
    month = "8",
    year = "2026"
}

@article{Rahimy:2025iyj,
    author = "Rahimy, Saba and Teixeira, Elsa M. and Zavala, Ivonne",
    title = "{Deciphering coupled scalar dark sectors}",
    eprint = "2503.01961",
    archivePrefix = "arXiv",
    primaryClass = "hep-th",
    doi = "10.1103/8hgv-l6ph",
    journal = "Phys. Rev. D",
    volume = "112",
    number = "4",
    pages = "043512",
    year = "2025"
}

@article{Teixeira:2024qmw,
    author = "Teixeira, Elsa M. and Poulot, Gaspard and van de Bruck, Carsten and Di Valentino, Eleonora and Poulin, Vivian",
    title = "{Alleviating cosmological tensions with a hybrid dark sector}",
    eprint = "2412.14139",
    archivePrefix = "arXiv",
    primaryClass = "astro-ph.CO",
    doi = "10.1103/9lf2-33zf",
    journal = "Phys. Rev. D",
    volume = "113",
    number = "2",
    pages = "023514",
    year = "2026"
}

@article{CosmoVerseNetwork:2025alb,
    author = "Di Valentino, Eleonora and others",
    collaboration = "CosmoVerse Network",
    title = "{The CosmoVerse White Paper: Addressing observational tensions in cosmology with systematics and fundamental physics}",
    eprint = "2504.01669",
    archivePrefix = "arXiv",
    primaryClass = "astro-ph.CO",
    doi = "10.1016/j.dark.2025.101965",
    journal = "Phys. Dark Univ.",
    volume = "49",
    pages = "101965",
    year = "2025"
}

@article{Abdalla:2022yfr,
    author = "Abdalla, Elcio and others",
    title = "{Cosmology intertwined: A review of the particle physics, astrophysics, and cosmology associated with the cosmological tensions and anomalies}",
    eprint = "2203.06142",
    archivePrefix = "arXiv",
    primaryClass = "astro-ph.CO",
    reportNumber = "FERMILAB-CONF-22-192-SCD",
    doi = "10.1016/j.jheap.2022.04.002",
    journal = "JHEAp",
    volume = "34",
    pages = "49--211",
    year = "2022"
}

@article{Garcia-Arroyo:2024tqq,
    author = "Garcia-Arroyo, Gabriela and Ure{\~n}a-L{\'o}pez, L. Arturo and V{\'a}zquez, J. Alberto",
    title = "{Interacting scalar fields: Dark matter and early dark energy}",
    eprint = "2402.08815",
    archivePrefix = "arXiv",
    primaryClass = "astro-ph.CO",
    doi = "10.1103/PhysRevD.110.023529",
    journal = "Phys. Rev. D",
    volume = "110",
    number = "2",
    pages = "023529",
    year = "2024"
}

@article{Urena-Lopez:2015gur,
    author = "Ure{\~n}a-L{\'o}pez, L. Arturo and Gonzalez-Morales, Alma X.",
    title = "{Towards accurate cosmological predictions for rapidly oscillating scalar fields as dark matter}",
    eprint = "1511.08195",
    archivePrefix = "arXiv",
    primaryClass = "astro-ph.CO",
    doi = "10.1088/1475-7516/2016/07/048",
    journal = "JCAP",
    volume = "07",
    pages = "048",
    year = "2016"
}

@article{Amendola:1999dr,
    author = "Amendola, Luca",
    title = "{Perturbations in a coupled scalar field cosmology}",
    eprint = "astro-ph/9906073",
    archivePrefix = "arXiv",
    doi = "10.1046/j.1365-8711.2000.03165.x",
    journal = "Mon. Not. Roy. Astron. Soc.",
    volume = "312",
    pages = "521",
    year = "2000"
}

@article{Kummer1837,
  author  = {Kummer, Ernst Eduard},
  title   = {De Generali Quadam Aequatione inter Series Hypergeometricas},
  journal = {Journal f{\"u}r die reine und angewandte Mathematik},
  volume  = {15},
  pages   = {39--83},
  year    = {1836}
}

@unpublished{parida:hal-05236085,
  TITLE = {{ON CERTAIN NEW WAYS TO FIND ASYMPTOTIC FORMULAS FOR THE BESSEL FUNCTIONS OF THE FIRST KIND}},
  AUTHOR = {Parida, Subrat},
  URL = {https://hal.science/hal-05236085},
  NOTE = {working paper or preprint},
  YEAR = {2025},
  MONTH = Sep,
  HAL_ID = {hal-05236085},
  HAL_VERSION = {v1},
}

@article{Sneddon_1961, 
     title="{L. J. Slater, Confluent Hypergeometric Functions (Cambridge University Press, 1960), 247 pp., 65s.}", 
     volume={12}, 
     DOI={10.1017/S0013091500002960}, 
     number={3}, 
     journal={Proceedings of the Edinburgh Mathematical Society}, 
     author={Sneddon, I. N.}, 
     year={1961}, 
     pages={169–169}
}

@book{zettili2009quantum,  
author    = {Nouredine Zettili},   title     = {Quantum Mechanics: Concepts and Applications},   edition   = {2},   year      = {2009},   publisher = {John Wiley \& Sons},   isbn      = {978-0-470-02679-3}}

@article{Lawi2008,
author = {Stephan Lawi},
title = {{Hermite and Laguerre Polynomials and Matrix-Valued Stochastic Processes}},
volume = {13},
journal = {Electronic Communications in Probability},
number = {none},
publisher = {Institute of Mathematical Statistics and Bernoulli Society},
pages = {67 -- 84},
year = {2008},
doi = {10.1214/ECP.v13-1353},
URL = {https://doi.org/10.1214/ECP.v13-1353}
}

@article{MUSTAPHA20102178,
title = {Generalized Laguerre expansions of multivariate probability densities with moments},
journal = {Computers \& Mathematics with Applications},
volume = {60},
number = {7},
pages = {2178-2189},
year = {2010},
issn = {0898-1221},
doi = {https://doi.org/10.1016/j.camwa.2010.08.008},
url = {https://www.sciencedirect.com/science/article/pii/S0898122110005717},
author = {Hussein Mustapha and Roussos Dimitrakopoulos}
}

\end{document}